\newtheorem{theorem}{Theorem}[section]
\newtheorem{definition}[theorem]{Definition}
\newtheorem{proposition}[theorem]{Proposition}
\newtheorem{remark}[theorem]{Remark}
\newcommand{\R}{\mathbb{R}}
\newcommand{\pqbinomial}[4]{\mbox{$
\biggl[ \!\!
\begin{array}{c}
#1\\
 #2
\end{array}\!\!\biggr]_{
\!{#3,#4}} $} }
\begin{document}

%
%
%
%
%
%
%
%
%

\title[$(p,q)$-Gamma function]
 {On the $(p,q)$-Gamma\\ and the $(p,q)$-Beta functions}

\author[P. Njionou Sadjang]{P. Njionou Sadjang}

\address{%
Faculty of Industrial Engineering\\
University of Douala\\
Cameroon}

\email{pnjionou@yahoo.fr}

\thanks{This work was supported  by TWAS }
\subjclass{33B15; 26A51; 26A48.} 

\keywords{Gamma function, Beta function, }

\date{\today}

\begin{abstract}
We introduce  new generalizations of the Gamma and the Beta functions. Their properties are investigated and kown results are obtained as particular cases. 
\end{abstract}

\maketitle
\section{Introduction}

The $q$-deformed algebras \cite{odzijewicz, quesne} and their generalizations ($(p,q)$-deformed algebras) \cite{burban, chakrabarti}  attract much attention these last years. The main reason is that these topics stand for a meeting point of today's fast developping areas in methematics and physics like the theory of quantum orthogonal polynomials and special functions, quantum groups, conformal field theories and statistics. From these works, many generalizations of special functions arise. There is a considerable list of references.

In this work, we give a new generalization of the Gamma and the Beta functions, namely, the $(p,q)$-Gamma and the $(p,q)$-Beta functions. Their main properties are stated and proved and connections with the previous work are done.   It is to be noted that in \cite{KM}, the authors provided another generalization of the Gamma function. 

The Euler gamma function $\Gamma(x)$ first happens in 1729 in a correspondance between Euler and Goldbach and  is defined for $x >0$ by (see \cite{Andrews-Askey, Nico})
\begin{equation}
\Gamma(x)=\int_{0}^{\infty}t^{x-1}e^{-t}dt.
\end{equation}
Euler gave an equivalent representation of the Gamma function \cite{Andrews-Askey,apostol, sandor, Nico}
\begin{equation}
\Gamma(x)=\lim\limits_{n\to \infty}\dfrac{n!n^x}{x(x+1)\cdots(x+n)}.
\end{equation}
An immediate consequence of these representations is 
\[\Gamma(x+1)=x\Gamma(x).\]
Also, for any nonnegative integer $n$,
\[\Gamma(n+1)=n!\]
follows from the above argument.

Closely connected with the Gamma function is the Beta function \cite{Andrews-Askey, Nico} defined as 
\begin{equation}
B(x,y)=\int_0^1t^{x-1}(1-t)^{y-1}dt, \;\;\Re x>0,\;\Re y>0.
\end{equation}
The Beta function is related to the Gamma function in the following way
\begin{equation}\label{betagamma}
B(x,y)=\dfrac{\Gamma(x)\Gamma(y)}{\Gamma(x+y)}.
\end{equation}

Jackson (see \cite{kim1,kim2,kim3, kim4}) defined the $q$-analogue of the gamma function as

\begin{equation}\label{qgamma1}
\Gamma_q(x)=\dfrac{(q;q)_{\infty}}{(q^x;q)_{\infty}}(1-q)^{1-x},\;\; 0<q<1,
\end{equation}
and
\begin{equation}
\Gamma_q(x)=\dfrac{(q^{-1};q^{-1})_{\infty}}{(q^{-x};q^{-1})_{\infty}}(q-1)^{1-x}q^{\binom{x}{2}},\;\; q>1.
\end{equation}

An equivalent definition of (\ref{qgamma1}) is given in \cite{kac} as 
\begin{equation}
\Gamma_q(x)=\int_{0}^{\infty}t^{x-1}E_q^{-qt}d_qt
\end{equation}
where the $q$-integral is defined by (see \cite{foupouagnignithesis, kac, njionou2013}) 
\begin{equation}\label{qinteq2}
\int_0^af(s)d_qs=a(1-q)\sum\limits_{n=0}^{\infty}
q^nf(aq^n),\quad a>0,
\end{equation}
\begin{equation}\label{qinteq3}
\int_a^0f(s)d_qs=-a(1-q)\sum\limits_{n=0}^{\infty}
q^nf(aq^n), \quad a<0,
\end{equation}
\begin{equation}\label{qinteq4}
\int_a^\infty
f(s)d_qs=a(q^{-1}-1)\sum\limits_{n=0}^{\infty}
q^{-n}f(aq^{-n-1}),\quad a>0,
\end{equation}
\begin{equation}\label{qinteq5}
\int_{-\infty}^a
f(s)d_qs=-a(q^{-1}-1)\sum\limits_{n=0}^{\infty}
q^{-n}f(aq^{-n-1}),\quad a<0,
\end{equation}
and can be  extended to the whole real line by using
relations (\ref{qinteq2})-(\ref{qinteq5}) and the following rules
\begin{eqnarray*}
  \int_a^bf(s)d_qs &=& \int_a^0f(s)d_qs+\int_0^bf(s)d_qs\quad \forall a,b\in\R  \\
  \int_a^\infty f(s)d_qs &=& \int_a^bf(s)d_qs+\int_b^\infty f(s)d_qs\quad \forall a,b\in\R,\,\,a<0,\,b>0 \\
  \int_{-\infty}^bf(s)d_qs &=&\int_{-\infty}^af(s)d_qs+\int_a^bf(s)d_qs\quad \forall a,b\in\R,\,\, a<0,\,b>0  \\
  \int_{-\infty}^\infty f(s)d_qs &=&
  \int_{-\infty}^af(s)d_qs+\int_a^bf(s)d_qs+\int_b^\infty
  f(s)d_qs\quad\forall a,b\in\R.
\end{eqnarray*}

\section{Definitions and Miscellaneous Relations}

Let us introduce the following notation (see  \cite{JS2006},\cite{  JR2010},\cite{ njionou-2014-3})
\begin{equation}\label{pqnumber}
[n]_{p,q}=\frac{p^n-q^n}{p-q},
\end{equation}
for any positive integer. 

\noindent The twin-basic number is a natural generalization of the $q$-number, that is
\begin{equation}
\lim\limits_{p\to 1}[n]_{p,q}=[n]_q.
\end{equation}

\noindent The $(p,q)$-factorial is defined by  (\cite{JR2010, njionou-2014-3})
\begin{equation}
[n]_{p,q}!=\prod_{k=1}^{n}[k]_{p,q}!,\quad n\geq 1,\quad [0]_{p,q}!=1.
\end{equation}
\noindent Let us introduce also the so-called $(p,q)$-binomial coefficient
\begin{equation}\label{pqbin}
\pqbinomial{n}{k}{p}{q}=\dfrac{[n]_{p,q}!}{[k]_{p,q}![n-k]_{p,q}!}, \quad 0\leq k\leq n.
\end{equation}
 Note that as $p\to 1$, the $(p,q)$-binomial coefficients reduce to the $q$-binomial coefficients.\\
 It is clear by definition that 
 \begin{equation}\label{binomial1}
 \pqbinomial{n}{k}{p}{q}=\pqbinomial{n}{n-k}{p}{q}.
 \end{equation}
 
\noindent  Let us introduce also the so-called the $(p,q)$-powers \cite{njionou-2014-3}
\begin{eqnarray}
 (x\ominus a)_{p,q}^n&=&(x-a)(px-aq)\cdots (xp^{n-1}-aq^{n-1}),\\
 (x\oplus a)_{p,q}^n&=&(x+a)(px+aq)\cdots (xp^{n-1}+aq^{n-1}).
\end{eqnarray}

\noindent These definition are extended to 
\begin{eqnarray}\label{infinitpq}
(a\ominus b)_{p,q}^{\infty}=\prod_{k=0}^{\infty}(ap^k-q^kb)\\
(a\oplus b)_{p,q}^{\infty}=\prod_{k=0}^{\infty}(ap^k+q^kb)
\end{eqnarray}
where the convergence is required.

\begin{proposition}
The following identities are easily verified
\begin{eqnarray}
(a\ominus b)_{p,q}^{n}&=&\dfrac{(a\ominus b)_{p,q}^{\infty}}{(ap^n\ominus bq^n)_{p,q}^{\infty}}\\
(a\ominus b)_{p,q}^{n+k}&=&(a\ominus b)_{p,q}^n(ap^n\ominus bq^n)_{p,q}^{k}\\
(ap^n\ominus bq^n)_{p,q}^k&=&\dfrac{(a\ominus b)_{p,q}^k(ap^k\ominus bq^k)_{p,q}^n}{(a\ominus b)_{p,q}^n}\\
(ap^k\ominus bq^k)_{p,q}^{n-k}&=&\frac{(a\ominus b)_{p,q}^{n}}{(a\ominus b)_{p,q}^{k}}\\
(ap^{2k}\ominus bq^{2k})_{p,q}^{n-k}&=&\frac{(a\ominus b)_{p,q}^{n}(ap^n\ominus bq^n)_{p,q}^k}{(a\ominus b)_{p,q}^{2k}}\\
(a^2\ominus b^2)_{p,q}^n&=& (a\ominus b)^n_{p,q}(a\oplus b)_{p,q}^n\\
(a\ominus b)_{p,q}^{2n}&=& (a\ominus b)_{p^2,q^2}^n(ap\ominus bq)_{p^2,q^2}^n\\
(a\ominus b)_{p,q}^{3n}&=& (a\ominus b)_{p^3,q^3}^n(ap\ominus bq)_{p^3,q^3}^n(ap^2\ominus bq^2)_{p^3,q^3}^n\\
(a\ominus b)_{p,q}^{\ell n}&=& \prod_{j=0}^{\ell-1}(ap^j\ominus bq^j)_{p^\ell,q^\ell}^n.
\end{eqnarray} 
\end{proposition}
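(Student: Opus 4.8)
The plan is to reduce every identity to the single finite-product representation $(x\ominus a)_{p,q}^n=\prod_{k=0}^{n-1}(xp^k-aq^k)$ together with its infinite analogue, and then to argue purely by bookkeeping of the integer exponents that occur. The one computation I would isolate first is the \emph{reindexing rule}: straight from the definition,
\[
(ap^m\ominus bq^m)_{p,q}^k=\prod_{j=0}^{k-1}(ap^{m+j}-bq^{m+j})=\prod_{i=m}^{m+k-1}(ap^i-bq^i).
\]
Thus a $(p,q)$-power is nothing but the product of the factors $ap^i-bq^i$ as $i$ runs over a block of consecutive integers $\{m,m+1,\dots,m+k-1\}$. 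Writing $P(S)=\prod_{i\in S}(ap^i-bq^i)$ for a finite set $S$ of exponents, the rule reads $(ap^m\ominus bq^m)_{p,q}^k=P(\{m,\dots,m+k-1\})$, and $P$ is multiplicative over disjoint unions. Every identity becomes a statement about such index sets.

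With this language the shift/quotient identities are immediate. Identity~(1) is the base case: the denominator $(ap^n\ominus bq^n)_{p,q}^\infty=P(\{n,n+1,\dots\})$ is exactly the tail of the numerator $P(\{0,1,\dots\})$, so their quotient is $P(\{0,\dots,n-1\})$, provided both infinite products converge (the caveat already attached to the definition of the infinite $(p,q)$-powers). Identity~(2) is $P(\{0,\dots,n+k-1\})=P(\{0,\dots,n-1\})\,P(\{n,\dots,n+k-1\})$, and identity~(4) is the corresponding quotient $P(\{k,\dots,n-1\})=P(\{0,\dots,n-1\})/P(\{0,\dots,k-1\})$. For~(3) I would clear denominators and check that both sides equal $P(\{0,\dots,n+k-1\})$: the left gives $P(\{n,\dots,n+k-1\})\,P(\{0,\dots,n-1\})$ and the right gives $P(\{0,\dots,k-1\})\,P(\{k,\dots,n+k-1\})$, two partitions of the same block. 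Identity~(5) is the same trick, both sides reducing to $P(\{2k,\dots,n+k-1\})$.

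The splitting identities~(7)--(9) rest on a second elementary observation: passing to base $(p^\ell,q^\ell)$ turns a block into an arithmetic progression,
\[
(ap^j\ominus bq^j)_{p^\ell,q^\ell}^n=\prod_{m=0}^{n-1}(ap^{j+\ell m}-bq^{j+\ell m})=P(\{j,\,j+\ell,\,\dots,\,j+\ell(n-1)\}).
\]
Identity~(9) then says precisely that $\{0,1,\dots,\ell n-1\}$ is the disjoint union over $j=0,\dots,\ell-1$ of these progressions, i.e. that every exponent $i$ in the block has a unique expression $i=\ell m+j$ with $0\le j<\ell$ and $0\le m<n$ --- the Euclidean division of $i$ by $\ell$. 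Identities~(7) and~(8) are the cases $\ell=2,3$. The remaining identity~(6) is the only one that genuinely mixes $\ominus$ and $\oplus$, so I would treat it separately through the factorization $(ap^k-bq^k)(ap^k+bq^k)=a^2p^{2k}-b^2q^{2k}$, which collapses the right-hand product to $\prod_{k=0}^{n-1}(a^2p^{2k}-b^2q^{2k})=(a^2\ominus b^2)_{p^2,q^2}^{\,n}$; note that the base that emerges on the left is $(p^2,q^2)$.

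I do not expect a single hard step; the substance lies entirely in getting the index bookkeeping right. The two places most likely to hide an off-by-one error are the quotient identities~(3) and~(5), where the blocks are shifted rather than nested, and the bijection $i\leftrightarrow(j,m)$ underlying the general identity~(9) --- phrasing that bijection as Euclidean division makes it airtight. The only genuine hypothesis to keep track of is convergence of the infinite products in identity~(1).
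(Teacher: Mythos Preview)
Your approach is correct and considerably more detailed than anything the paper offers: the paper gives no proof at all beyond the phrase ``easily verified''. The device of writing $(ap^m\ominus bq^m)_{p,q}^k=P(\{m,\dots,m+k-1\})$ and reducing every shift/quotient identity to a disjoint-union statement about blocks of integers is clean and makes identities (1)--(5) and (7)--(9) transparent; the Euclidean-division argument for (9) is exactly the right way to phrase it.

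You have also correctly flagged a genuine error in the paper. Identity (6) as printed, $(a^2\ominus b^2)_{p,q}^n=(a\ominus b)_{p,q}^n(a\oplus b)_{p,q}^n$, is false for $n\geq 2$: by the definition one has $(a^2\ominus b^2)_{p,q}^n=\prod_{k=0}^{n-1}(a^2p^k-b^2q^k)$, whereas pairing the $\ominus$ and $\oplus$ factors gives $\prod_{k=0}^{n-1}(a^2p^{2k}-b^2q^{2k})=(a^2\ominus b^2)_{p^2,q^2}^n$. Already at $n=2$ the left side contains the factor $a^2p-b^2q$ while the right side contains $a^2p^2-b^2q^2$. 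Your parenthetical ``note that the base that emerges on the left is $(p^2,q^2)$'' is the right correction; you might state it more assertively as an erratum rather than a side remark.
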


\section{The $(p,q)$-Gamma function}


\begin{definition}\label{pqgamma}
Let $x$ be a complex number, we define the  $(p,q)$-Gamma function as
\begin{equation}\label{pqgam}
\Gamma_{p,q}(x)=\dfrac{(p\ominus q)_{p,q}^{\infty}}{(p^x\ominus q^x)^\infty_{p,q}}(p-q)^{1-x},\; 0<q<p.
\end{equation}
\end{definition}

\begin{remark}
Note that in (\ref{pqgam}), if we put $p=1$, then $\Gamma_{p,q}$ reduces to $\Gamma_q$.
\end{remark}


\begin{proposition}
The $(p,q)$-Gamma function fulfils the following fundemental relation
\begin{equation}\label{gammarec}
\Gamma_{p,q}(x+1)=[x]_{p,q}\Gamma_{p,q}(x).
\end{equation}

\end{proposition}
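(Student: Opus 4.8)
The plan is to form the quotient $\Gamma_{p,q}(x+1)/\Gamma_{p,q}(x)$ directly from Definition \ref{pqgamma} and show that every ingredient collapses to $[x]_{p,q}$. Writing out both factors, the common factor $(p\ominus q)_{p,q}^{\infty}$ appearing in each numerator cancels, while the powers of $(p-q)$ combine as $(p-q)^{1-(x+1)}/(p-q)^{1-x}=(p-q)^{-1}$. Thus the whole computation reduces to evaluating the single ratio of infinite $(p,q)$-powers
\[
\frac{(p^x \ominus q^x)_{p,q}^{\infty}}{(p^{x+1}\ominus q^{x+1})_{p,q}^{\infty}}.
\]

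First I would rewrite the denominator argument using $p^{x+1}=p^x\cdot p$ and $q^{x+1}=q^x\cdot q$, so that the denominator is exactly $(p^x p \ominus q^x q)_{p,q}^{\infty}$. Then the first identity of the preceding Proposition, applied with $a=p^x$, $b=q^x$ and $n=1$, gives
\[
(p^x \ominus q^x)_{p,q}^{1}=\frac{(p^x \ominus q^x)_{p,q}^{\infty}}{(p^x p \ominus q^x q)_{p,q}^{\infty}}.
\]
Since $(p^x \ominus q^x)_{p,q}^{1}=p^x-q^x$ by the definition of the finite $(p,q)$-power at $n=1$, the displayed ratio equals $p^x-q^x$. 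Combining this with the $(p-q)^{-1}$ collected above yields $\Gamma_{p,q}(x+1)/\Gamma_{p,q}(x)=(p^x-q^x)(p-q)^{-1}=[x]_{p,q}$ by (\ref{pqnumber}), which is the claim. Alternatively, one can bypass the Proposition and argue by telescoping: $(p^x\ominus q^x)_{p,q}^{\infty}=\prod_{k=0}^{\infty}(p^{x+k}-q^{x+k})$ differs from $(p^{x+1}\ominus q^{x+1})_{p,q}^{\infty}=\prod_{k=1}^{\infty}(p^{x+k}-q^{x+k})$ only by the absent $k=0$ factor $p^x-q^x$, giving the same ratio.

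The only point that genuinely requires care is the manipulation of the infinite products: strictly, both $(p,q)$-powers are defined only where the products converge, as noted after (\ref{infinitpq}), so the cancellation must be read either as an identity between convergent products or, more safely, as a formal consequence of the Proposition's first identity, which is already granted. I expect this convergence bookkeeping, rather than any of the algebra, to be the main (and essentially the only) obstacle; routing the argument through the already-established Proposition is what makes it disappear.
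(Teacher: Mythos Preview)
Your argument is correct and follows exactly the route the paper takes: write out $\Gamma_{p,q}(x+1)$ from Definition~\ref{pqgamma} and compare with $\Gamma_{p,q}(x)$ so that the infinite $(p,q)$-powers telescope to the single factor $p^x-q^x$ and the powers of $p-q$ supply the remaining $(p-q)^{-1}$. In fact the paper's printed proof stops after displaying $\Gamma_{p,q}(x+1)=\dfrac{(p\ominus q)_{p,q}^{\infty}}{(p^{x+1}\ominus q^{x+1})^\infty_{p,q}}(p-q)^{-x}$, so your write-up is more complete than the original.
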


\begin{proof}
By definition, we have 
\begin{align*}
\Gamma_{p,q}(x+1)=\dfrac{(p\ominus q)_{p,q}^{\infty}}{(p^{x+1}\ominus q^{x+1})^\infty_{p,q}}(p-q)^{-x}
\end{align*}

\end{proof}

\begin{remark}
If $n$ is a nonnegative integer, it follows from (\ref{gammarec}) that 
\[\Gamma_{p,q}(n+1)=[n]_{p,q}!.\]
It can be also easyly seen from the definition that
\[\Gamma_{p,q}(n+1)=\dfrac{(p\ominus q)^n_{p,q}}{(p-q)^n}.\]
\end{remark}

\begin{proposition}[$(p,q)$-Legendre's multiplication formula] The following multiplication formula applies
\begin{equation}
\Gamma_{p,q}(2x)\Gamma_{p^2,q^2}\left(\frac{1}{2}\right)=(p+q)^{2x-1}\Gamma_{p^2,q^2}(x)\Gamma_{p^2,q^2}\left(x+\frac{1}{2}\right).
\end{equation}
\end{proposition}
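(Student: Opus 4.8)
The plan is to express every factor through the defining product (\ref{pqgam}) and to collapse the resulting infinite products by splitting each $(p,q)$-power according to the parity of its index. The essential tool is the infinite analogue of the identity $(a\ominus b)_{p,q}^{2n}=(a\ominus b)_{p^2,q^2}^n(ap\ominus bq)_{p^2,q^2}^n$ established in Section~2, namely
\begin{equation}\label{evenodd}
(a\ominus b)_{p,q}^{\infty}=(a\ominus b)_{p^2,q^2}^{\infty}\,(ap\ominus bq)_{p^2,q^2}^{\infty}.
\end{equation}
This is immediate from (\ref{infinitpq}): writing $\prod_{k\ge 0}(ap^k-bq^k)$ and separating the even indices $k=2j$ from the odd indices $k=2j+1$ gives $\prod_{j\ge0}\big(a(p^2)^j-b(q^2)^j\big)=(a\ominus b)_{p^2,q^2}^{\infty}$ from the former and $\prod_{j\ge0}\big(ap\,(p^2)^j-bq\,(q^2)^j\big)=(ap\ominus bq)_{p^2,q^2}^{\infty}$ from the latter.

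First I would apply (\ref{evenodd}) with $(a,b)=(p,q)$ to the numerator of $\Gamma_{p,q}(2x)$ and with $(a,b)=(p^{2x},q^{2x})$ to its denominator, which turns (\ref{pqgam}) into
\begin{equation*}
\Gamma_{p,q}(2x)=\frac{(p\ominus q)_{p^2,q^2}^{\infty}\,(p^2\ominus q^2)_{p^2,q^2}^{\infty}}{(p^{2x}\ominus q^{2x})_{p^2,q^2}^{\infty}\,(p^{2x+1}\ominus q^{2x+1})_{p^2,q^2}^{\infty}}\,(p-q)^{1-2x}.
\end{equation*}
Then I would write the three $\Gamma_{p^2,q^2}$-factors straight from (\ref{pqgam}), using $(p^2)^{1/2}=p$, $(q^2)^{1/2}=q$ for $\Gamma_{p^2,q^2}(1/2)$ and $(p^2)^{x+1/2}=p^{2x+1}$, $(q^2)^{x+1/2}=q^{2x+1}$ for $\Gamma_{p^2,q^2}(x+1/2)$. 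Forming the left-hand product $\Gamma_{p,q}(2x)\Gamma_{p^2,q^2}(1/2)$, the factor $(p\ominus q)_{p^2,q^2}^{\infty}$ cancels against the denominator of $\Gamma_{p^2,q^2}(1/2)$; forming the right-hand product $\Gamma_{p^2,q^2}(x)\Gamma_{p^2,q^2}(x+1/2)$ produces exactly the same surviving quotient
\begin{equation*}
\frac{\big[(p^2\ominus q^2)_{p^2,q^2}^{\infty}\big]^2}{(p^{2x}\ominus q^{2x})_{p^2,q^2}^{\infty}\,(p^{2x+1}\ominus q^{2x+1})_{p^2,q^2}^{\infty}}.
\end{equation*}
Thus all infinite products agree on the two sides and the claim reduces to matching the scalar prefactors.

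It remains to reconcile those prefactors, which is where the only real care is needed. The left-hand side carries $(p-q)^{1-2x}(p^2-q^2)^{1/2}$ and the right-hand side carries $(p+q)^{2x-1}(p^2-q^2)^{3/2-2x}$; inserting $p^2-q^2=(p-q)(p+q)$ in both, each collapses to $(p-q)^{3/2-2x}(p+q)^{1/2}$, which establishes the formula. I do not anticipate a deeper obstacle than this bookkeeping. The one subtlety worth recording is that (\ref{evenodd}) and the rearrangements above presuppose convergence of the infinite products, which holds under the standing hypothesis $0<q<p$ of Definition~\ref{pqgamma}, so that regrouping the factors by parity is legitimate.
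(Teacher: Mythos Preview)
Your argument is correct and is essentially the paper's own proof: both hinge on the parity splitting $(a\ominus b)_{p,q}^{\infty}=(a\ominus b)_{p^2,q^2}^{\infty}(ap\ominus bq)_{p^2,q^2}^{\infty}$ together with $p^2-q^2=(p-q)(p+q)$, and the only difference is organizational---the paper forms the quotient $\Gamma_{p^2,q^2}(x)\Gamma_{p^2,q^2}(x+\tfrac12)/\Gamma_{p^2,q^2}(\tfrac12)$ and collapses the $(p^2,q^2)$-products back into $(p,q)$-products, whereas you expand $\Gamma_{p,q}(2x)$ forward into $(p^2,q^2)$-products and then match the two sides.
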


\begin{proof}
From the definition, we have
\begin{align*}
\Gamma_{p^2,q^2}(x)&=\dfrac{(p^2\ominus q^2)_{p^2,q^2}^{\infty}}{(p^{2x}\ominus q^{2x})_{p^2,q^2}^{\infty}}(p^2-q^2)^{1-x}\\
\Gamma_{p^2,q^2}\left(x+\frac{1}{2}\right)&= \dfrac{(p^2\ominus q^2)_{p^2,q^2}^{\infty}}{(p^{2x+1}\ominus q^{2x+1})_{p^2,q^2}^{\infty}}(p^2-q^2)^{\frac{1}{2}-x}\\
\Gamma_{p^2,q^2}\left(\frac{1}{2}\right)&= \dfrac{(p^2\ominus q^2)_{p^2,q^2}^{\infty}}{(p\ominus q)_{p^2,q^2}^{\infty}}(p^2-q^2)^{\frac{1}{2}}.
\end{align*}
Hence, 
\begin{align*}
\dfrac{\Gamma_{p^2,q^2}(x)\Gamma_{p^2,q^2}\left(x+\frac{1}{2}\right)}{\Gamma_{p^2,q^2}\left(\frac{1}{2}\right)}&=
\dfrac{(p^2\ominus q^2)_{p^2,q^2}^{\infty}(p\ominus q)_{p^2,q^2}^{\infty}}{(p^{2x}\ominus q^{2x})_{p^2,q^2}^{\infty}(p^{2x+1}\ominus q^{2x+1})_{p^2,q^2}^{\infty}}(p^2-q^2)^{1-2x}\\
&=\dfrac{(p\ominus q)_{p,q}^{\infty}}{(p^{2x}\ominus q^{2x})_{p,q}^{\infty}}(p-q)^{1-2x}(p+q)^{1-2x}\\
&=(p+q)^{1-2x}\Gamma_{p,q}(2x).
\end{align*}
This proves the proposition.
\end{proof}
\noindent The $(p,q)$-Legendre's multiplication formula is generalized as follows.
\begin{proposition}[$(p,q)$-Gauss' multiplication formula]
The following multiplication formula applies
\begin{align}
\Gamma_{p,q}(nx)\prod_{k=1}^{n-1}\Gamma_{p^n,q^n}\left(\frac{k}{n}\right)=([n]_{p,q})^{nx-1}\prod_{k=0}^{n-1}\Gamma_{p^n,q^n}\left(x+\frac{k}{n}\right).
\end{align}
\end{proposition}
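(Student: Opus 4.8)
The plan is to mirror the proof of the $(p,q)$-Legendre formula: expand every Gamma factor on both sides through Definition \ref{pqgamma}, collect the resulting infinite $(p,q)$-powers, and reduce them by sorting each infinite product into residue classes modulo $n$. Concretely, I would form the quotient
\[
R:=\frac{\prod_{k=0}^{n-1}\Gamma_{p^n,q^n}\!\left(x+\tfrac{k}{n}\right)}{\prod_{k=1}^{n-1}\Gamma_{p^n,q^n}\!\left(\tfrac{k}{n}\right)},
\]
so that the assertion becomes equivalent to $R=[n]_{p,q}^{1-nx}\,\Gamma_{p,q}(nx)$, exactly as the displayed computation in the Legendre case collapses to $(p+q)^{1-2x}\Gamma_{p,q}(2x)$.

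The key step is the infinite-product analogue of the last identity of the Proposition (the case $\ell=n$), namely
\[
(a\ominus b)_{p,q}^{\infty}=\prod_{j=0}^{n-1}(ap^j\ominus bq^j)_{p^n,q^n}^{\infty},
\]
which follows by grouping the factors $ap^i-bq^i$ according to $i \bmod n$. Applying it with $(a,b)=(p^{nx},q^{nx})$ gives
\[
(p^{nx}\ominus q^{nx})_{p,q}^{\infty}=\prod_{k=0}^{n-1}\big((p^n)^{x+k/n}\ominus (q^n)^{x+k/n}\big)_{p^n,q^n}^{\infty},
\]
which is precisely the denominator produced by the numerator product of $R$. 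Applying the same identity with $(a,b)=(p,q)$ and peeling off the $k=n$ term yields
\[
(p\ominus q)_{p,q}^{\infty}=(p^n\ominus q^n)_{p^n,q^n}^{\infty}\prod_{k=1}^{n-1}\big((p^n)^{k/n}\ominus (q^n)^{k/n}\big)_{p^n,q^n}^{\infty},
\]
and the right-hand product here is exactly what survives once the $(p^n\ominus q^n)_{p^n,q^n}^{\infty}$ factors from the numerators are matched against the denominator $\prod_{k=1}^{n-1}\Gamma_{p^n,q^n}(k/n)$. Thus both families of infinite $(p,q)$-powers collapse into the two factors $(p\ominus q)_{p,q}^{\infty}$ and $(p^{nx}\ominus q^{nx})_{p,q}^{\infty}$ that constitute $\Gamma_{p,q}(nx)$.

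What remains is routine bookkeeping of the scalar powers. Summing the exponents of $(p^n-q^n)$ coming from the definition, I would use $\sum_{k=0}^{n-1}(1-x-\tfrac{k}{n})=n-nx-\tfrac{n-1}{2}$ in the numerator and $\sum_{k=1}^{n-1}(1-\tfrac{k}{n})=\tfrac{n-1}{2}$ in the denominator, so that $R$ carries the net factor $(p^n-q^n)^{1-nx}$. Matching this against the $(p-q)^{1-nx}$ implicit in $\Gamma_{p,q}(nx)$ produces $\big(\tfrac{p^n-q^n}{p-q}\big)^{1-nx}=[n]_{p,q}^{1-nx}$, whence $R=[n]_{p,q}^{1-nx}\Gamma_{p,q}(nx)$ and the stated formula after clearing denominators.

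The main obstacle is getting the residue-class splitting and its edge terms exactly right. The tempting shortcut of specializing the first splitting identity at $x=0$ to evaluate $\prod_{k=1}^{n-1}\Gamma_{p^n,q^n}(k/n)$ fails, because the $k=0$ factor $(1\ominus 1)_{p^n,q^n}^{\infty}$ carries a vanishing term; this is why I would instead split $(p\ominus q)_{p,q}^{\infty}$ directly and isolate the $k=n$ term. Verifying that the index ranges line up on both sides — that the numerator supplies precisely $n$ copies of $(p^n\ominus q^n)_{p^n,q^n}^{\infty}$ while the denominator supplies $n-1$, leaving a single copy to merge with the remaining factors into $(p\ominus q)_{p,q}^{\infty}$ — is the delicate part of the argument.
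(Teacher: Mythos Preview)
Your proposal is correct and follows essentially the same route as the paper: form the quotient $R$, apply the residue-class splitting $(a\ominus b)_{p,q}^{\infty}=\prod_{j=0}^{n-1}(ap^j\ominus bq^j)_{p^n,q^n}^{\infty}$ with $(a,b)=(p^{nx},q^{nx})$ and $(a,b)=(p,q)$ to collapse the two families of infinite products, and finish with the same exponent bookkeeping $(p^n-q^n)^{1-nx}=(p-q)^{1-nx}[n]_{p,q}^{1-nx}$. Your handling of the edge term---isolating the $k=n$ factor $(p^n\ominus q^n)_{p^n,q^n}^{\infty}$ in the second splitting---matches the paper's device of multiplying numerator and denominator by that same factor to extend the product range.
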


\begin{proof}
As for the previous proposition, we start by using the definition as follows
\begin{align*}
\Gamma_{p^n,q^n}\left(\frac{k}{n}\right)&=\dfrac{(p^n\ominus q^n)^{\infty}_{p^n,q^n}}{(p^k\ominus q^k)^{\infty}_{p^n,q^n}}(p^n-q^n)^{1-\frac{k}{n}},\\
\Gamma_{p^n,q^n}\left(x+\frac{k}{n}\right)&=\dfrac{(p^n\ominus q^n)^{\infty}_{p^n,q^n}}{(p^{nx+k}\ominus q^{nx+k})^{\infty}_{p^n,q^n}}(p^n-q^n)^{1-\frac{k}{n}-x}.
\end{align*}
Hence, we have
\begin{align*}
\prod_{k=1}^{n-1}\Gamma_{p^n,q^n}\left(\frac{k}{n}\right)&=\dfrac{\left[ (p^n\ominus q^n)^{\infty}_{p^n,q^n} \right]^{n-1}}{\prod\limits_{k=1}^{n-1}(p^k\ominus q^k)^{\infty}_{p^n,q^n}}(p^n-q^n)^{\sum\limits_{k=1}^{n-1}(1-\frac{k}{n})}\\
&=\dfrac{\left[ (p^n\ominus q^n)^{\infty}_{p^n,q^n} \right]^{n}}{\prod\limits_{k=0}^{n-1}(p.p^k\ominus q.q^k)^{\infty}_{p^n,q^n}}(p^n-q^n)^{\frac{n-1}{2}}\\
&=\dfrac{\left[ (p^n\ominus q^n)^{\infty}_{p^n,q^n} \right]^{n}}{(p\ominus q)^{\infty}_{p,q}}(p^n-q^n)^{\frac{n-1}{2}}
\end{align*}
and 
\begin{align*}
\prod_{k=0}^{n-1}\Gamma_{p^n,q^n}\left(x+\frac{k}{n}\right)&=\dfrac{\left[(p^n\ominus q^n)^{\infty}_{p^n,q^n}\right]^n}{\prod\limits_{k=0}^{n-1}(p^{nx+k}\ominus q^{nx+k})^{\infty}_{p^n,q^n}}(p^n-q^n)^{\sum\limits_{k=0}^{n-1}(1-\frac{k}{n}-x)}\\
&=\dfrac{\left[(p^n\ominus q^n)^{\infty}_{p^n,q^n}\right]^n}{(p^{nx}\ominus q^{nx})^{\infty}_{p,q}}(p^n-q^n)^{(\frac{n-1}{2}+1-nx)}.
\end{align*}
It follows that 
\begin{align*}
\dfrac{\prod\limits_{k=0}^{n-1}\Gamma_{p^n,q^n}\left(x+\frac{k}{n}\right)}{\prod\limits_{k=1}^{n-1}\Gamma_{p^n,q^n}\left(\frac{k}{n}\right)}&= \dfrac{(p\ominus q)^{\infty}_{p,q}}{(p^{nx}\ominus q^{nx})^{\infty}_{p,q}}(p^n-q^n)^{1-nx}\\
&= \dfrac{(p\ominus q)^{\infty}_{p,q}}{(p^{nx}\ominus q^{nx})^{\infty}_{p,q}}(p-q)^{1-nx}\left(\dfrac{p^n-q^n}{p-q}\right)^{1-nx}\\
&=([n]_{p,q})^{1-nx}\Gamma_{p,q}(nx).
\end{align*}
The proposition is then proved.
\end{proof}


\section{The $(p,q)$-Beta function}

\begin{definition}
Following \eqref{betagamma}, we define the $(p,q)$-Beta functions as 
\begin{equation}
B_{p,q}(x,y)=\frac{\Gamma_{p,q}(x)\Gamma_{p,q}(y)}{\Gamma_{p,q}(x+y)}.
\end{equation} 
\end{definition}

\begin{proposition}
The $(p,q)$-Beta function fulfils the following properties
\begin{eqnarray}
B_{p,q}(x,y+1)&=& \dfrac{[y]_{p,q}}{[x+y]_{p,q}}B_{p,q}(x,y)\\
B_{p,q}(x+1,y)&=& \dfrac{[x]_{p,q}}{[x+y]}_{p,q}B_{p,q}(x,y)\\
B_{p,q}(x+1,y)&=&\dfrac{[x]_{p,q}}{[y]_{p,q}}B_{p,q}(x,y+1)\\
B_{p,q}(x+n,y)&=& \dfrac{(p^x\ominus q^x)^n_{p,q}}{(p^{x+y}\ominus q^{x+y})^n_{p,q}}B_{p,q}(x,y).
\end{eqnarray}
\end{proposition}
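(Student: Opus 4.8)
The plan is to derive all four identities from the single fundamental recurrence \eqref{gammarec}, $\Gamma_{p,q}(x+1)=[x]_{p,q}\Gamma_{p,q}(x)$, together with the definition $B_{p,q}(x,y)=\Gamma_{p,q}(x)\Gamma_{p,q}(y)/\Gamma_{p,q}(x+y)$; no new integral or infinite-product computation is needed beyond what is already established.

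For the first identity I would expand $B_{p,q}(x,y+1)=\Gamma_{p,q}(x)\Gamma_{p,q}(y+1)/\Gamma_{p,q}(x+y+1)$ and apply \eqref{gammarec} twice, once to $\Gamma_{p,q}(y+1)=[y]_{p,q}\Gamma_{p,q}(y)$ and once to $\Gamma_{p,q}(x+y+1)=[x+y]_{p,q}\Gamma_{p,q}(x+y)$; the factor $[y]_{p,q}/[x+y]_{p,q}$ then pulls out and what remains is exactly $B_{p,q}(x,y)$. The second identity is proved the same way with the roles of the two arguments interchanged, now applying \eqref{gammarec} to $\Gamma_{p,q}(x+1)$ in the numerator. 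The third identity needs no return to $\Gamma_{p,q}$: I would simply solve the first identity for $B_{p,q}(x,y)=([x+y]_{p,q}/[y]_{p,q})B_{p,q}(x,y+1)$ and substitute this into the second, so that the $[x+y]_{p,q}$ factors cancel and leave $[x]_{p,q}/[y]_{p,q}$.

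The fourth identity is the only one requiring genuine iteration, and I regard it as the main step. I would apply the second identity $n$ times to obtain $B_{p,q}(x+n,y)=\prod_{k=0}^{n-1}\frac{[x+k]_{p,q}}{[x+y+k]_{p,q}}B_{p,q}(x,y)$, and then identify the product. Using the definition of the $(p,q)$-power, $(p^x\ominus q^x)_{p,q}^n=\prod_{k=0}^{n-1}(p^{x+k}-q^{x+k})$, together with $[x+k]_{p,q}=(p^{x+k}-q^{x+k})/(p-q)$, the numerator product becomes $(p^x\ominus q^x)_{p,q}^n/(p-q)^n$ and the denominator product becomes $(p^{x+y}\ominus q^{x+y})_{p,q}^n/(p-q)^n$; the two factors $(p-q)^n$ cancel and yield the claimed ratio.

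The only point to watch is precisely this cancellation of the $(p-q)^n$ normalisations in the telescoped product. An alternative that avoids the iteration altogether is to compute $B_{p,q}(x+n,y)/B_{p,q}(x,y)=\Gamma_{p,q}(x+n)\Gamma_{p,q}(x+y)/[\Gamma_{p,q}(x)\Gamma_{p,q}(x+n+y)]$ directly from the definition and evaluate each Gamma ratio through the first identity of the preceding Proposition, $(a\ominus b)_{p,q}^n=(a\ominus b)_{p,q}^\infty/(ap^n\ominus bq^n)_{p,q}^\infty$, taken with $(a,b)=(p^x,q^x)$ and $(a,b)=(p^{x+y},q^{x+y})$; this reproduces the same factors $(p^x\ominus q^x)_{p,q}^n$ and $(p^{x+y}\ominus q^{x+y})_{p,q}^n$ straight from the infinite products and again cancels the surplus powers of $(p-q)$.
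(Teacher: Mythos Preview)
Your proof is correct. The paper actually states this proposition without proof, so there is nothing to compare against; your argument via the recurrence \eqref{gammarec} and the definition of $B_{p,q}$ is exactly the natural route and is fully consistent with how the paper handles the earlier propositions.
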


\end{document}